\definecolor{webgreen}{rgb}{0,.5,0}
\definecolor{webbrown}{rgb}{.6,0,0}
\newcommand{\thickhline}{
    \noalign {\ifnum 0=`}\fi \hrule height 1pt
    \futurelet \reserved@a \@xhline
}
\newcolumntype{"}{@{\hskip\tabcolsep\vrule width 1pt\hskip\tabcolsep}}
\theoremstyle{plain}
\newtheorem{theorem}{Theorem}
\newtheorem{lemma}[theorem]{Lemma}
\newtheorem{proposition}[theorem]{Proposition}
\theoremstyle{definition}
\theoremstyle{remark}
\begin{document}

\title{SOS}
\author{Jasper Caravan}
\author{Michael Han}
\author{Andrew Kalashnikov}
\author{Ella Li}
\author{Alexander Meng}
\author{Vaibhav Rastogi}
\author{Gordon Redwine}
\author{Lev Strougov}
\author{Angela Zhao}
\affil{PRIMES STEP}
\author{Tanya Khovanova}
\affil{MIT}

\maketitle

\begin{abstract}
SOS is a game similar to tic-tac-toe. We study a variety of variations of it played on a 1-by-$n$ rectangle. On our journey we change the target string to SOO, then study all target strings containing SSS, then go back to finite strings and study SOSO. Then we create a variation with two target strings SSSS-OOOO. We continue with mis\`{e}re games. After that we look at different changes of the board: we wrap the board around, add dimension, and allow it to expand. We end with a variation that came from the ultimate tic-tac-toe.
\end{abstract}

\section{The Original Problem and Its Solution}

SOS is not only a cry for help but also the name of a game similar to tic-tac-toe and dots and boxes, born in the midst of a cry for help. The year was 1999, and many people feared a big computer crash as soon as 2000 arrived. In the past, many programmers didn't expect their programs to last long and used the date format with only the last two digits for a year. That meant that as soon as January 1, 2000 arrived, it would be counted as a past date, potentially creating all sorts of problems. SOS!

The game appeared as a USAMO problem in 1999, see also \cite{F}.

\textbf{Problem.}
The Y2K Game is played on a $1 \times 2000$ grid as follows. Two players, in turn, write either an S or an O in an empty square. The first player who produces three consecutive cells that spell SOS wins. The game is a draw if all cells are filled without producing SOS. Prove that the second player has a winning strategy.

The solution to this problem appears in many places. In particular, the Art of Problem Solving website \cite{AoPS} collects competition problems and their solutions.

It is not very surprising that in a Y2K problem, the grid length is 2000. But since everyone has forgotten about the Y2K scare nowadays, we assume we have a 1-by-$n$ grid for any $n$. Before discussing the solution, we start with some definitions.

\subsection{Definitions}

We assume that both players play optimally, and the squares are labeled $1$ through $n$. We will use the letter E to describe an empty cell.

We call a move \textit{safe} if the opponent can't win on the next move. We call a move \textit{unsafe} if the opponent can win on the next move. Each move is either safe or unsafe.

We call two blank cells separated by two characters \textit{a losing pair} if any move on one of these blank cells is unsafe. We use this terminology because, when a losing pair is created, someone is guaranteed to lose, as we see in the next proposition.

\begin{proposition}
If there exists a losing pair on the board, the game is not a draw.
\end{proposition}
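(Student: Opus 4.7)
The plan is to argue by contradiction. Assume the game ends in a draw, so that eventually every cell is filled and no one ever completes SOS. Let the losing pair consist of the blank cells at positions $i$ and $i+3$, with fixed characters in positions $i+1$ and $i+2$. Since the board is completely filled at the end of a drawn game, both cells of the pair must eventually be played on, so there is a well-defined first move landing in one of them. Call this move $M$, made by player $A$ at cell $c\in\{i,i+3\}$ at time $t$.

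Next I would unpack the board state just before $M$. By the choice of $t$, neither cell of the pair has been touched yet, and the two middle characters at $i+1,i+2$ are still exactly the characters recorded when the losing pair was identified (once a letter is written it never changes). So the four-cell window $i,i+1,i+2,i+3$ looks exactly as it did at the moment the pair was declared a losing pair. In particular, the reason every move at $c$ was labeled unsafe back then — namely that the opponent has a reply, internal to this four-cell block, that places the appropriate letter in the other cell of the pair and thereby completes SOS — is still a legal and winning reply at time $t$. Hence $M$ is an unsafe move at time $t$.

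Since the opponent plays optimally, the opponent now executes that winning response at the remaining cell of the pair and spells SOS on move $t+1$. This produces a winner, contradicting the assumption that the game was drawn. Therefore the existence of a losing pair rules out a draw.

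The only real obstacle, and it is a small one, is justifying that the ``unsafe'' label survives between the moment the losing pair was first spotted and time $t$. This is precisely where the wording ``separated by two characters'' does the work: the two middle symbols are frozen, the two end cells are blank by choice of $t$, and so the opponent's winning reply — which lies inside the four-cell block — is insensitive to whatever moves have happened elsewhere. Everything else in the proof is bookkeeping, and no case analysis on the identities of the two middle characters is needed.
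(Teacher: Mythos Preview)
Your overall argument --- assume a draw, note that someone must eventually play inside the pair, observe that this move is unsafe, and conclude the opponent wins on the next turn --- is exactly the paper's approach; the paper compresses it to the single line ``If a letter is placed inside the losing pair, the next player wins.''

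There is one slip worth fixing: you have the geometry of the losing pair reversed. The phrase ``two blank cells separated by two characters'' is admittedly awkward, but the paper's own example (``a pair of empty cells between two S's'', i.e.\ the pattern S\,E\,E\,S in the SOS game) shows that the intended configuration has the two blanks \emph{adjacent in the middle} and the two fixed letters on the outside --- blanks at positions $i+1,i+2$ and letters at $i,i+3$, not the other way around. With the correct layout your persistence argument goes through cleanly: a play in one interior blank is answered in the other interior blank, and the completed SOS lies entirely within the four-cell window, so nothing that happened elsewhere on the board can interfere. With your stated layout (blanks at $i$ and $i+3$, letters at $i+1,i+2$), no choice of the two middle letters actually makes both end cells unsafe from the block alone --- for instance, if the middle reads S\,O or O\,S then one of the end cells is a \emph{winning} move, and if it reads S\,S or O\,O no SOS can be formed in the window at all --- so the clause ``the opponent has a reply, internal to this four-cell block, \dots\ and thereby completes SOS'' would not be justified as written.
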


\begin{proof}
If a letter is placed inside the losing pair, the next player wins.
\end{proof}

We call a player who sees an odd number of cells before their turn a \textit{designated winner}. In other words, on odd-sized boards, the first player is a designated winner, and on even-sized boards, the second player is a designated winner.

\begin{theorem}
\label{thm:losingpaircreated}
If a losing pair is created without making unsafe moves, and a designated winner always has a safe move outside of losing pairs, then the designated winner wins.
\end{theorem}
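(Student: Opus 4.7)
The plan is to have the designated winner follow an explicit strategy: always make a safe move outside of any losing pair. Such a move exists on every turn the designated winner is to move, by hypothesis, so the strategy is well-defined. The goal is to show that this strategy forces the opponent (the non-designated player) into an unsafe move, at which point the designated winner wins on the following turn.

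First I would analyze the opponent's options. By definition, every move inside a losing pair is unsafe, so any safe move by either player must lie outside every losing pair currently present. If the opponent ever plays unsafely, the designated winner wins on the next turn and we are done. So we may assume that, from the moment the first losing pair is created, both players keep making safe moves outside all losing pairs, and ask whether this can go on forever.

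The key observation is a parity invariant for $s$, the number of empty cells that lie outside every losing pair. Each move changes $s$ by an odd amount: a move that creates no new losing pair decreases $s$ by exactly $1$, whereas a move that creates $p \geq 1$ new losing pairs decreases $s$ by $1 + 2p$, since the $2p$ cells newly frozen inside losing pairs were previously counted in $s$. In either case the parity of $s$ flips every turn.

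Now I would finish by a parity count. Right after the first losing pair is created on turn $t$, there are $n-t$ empty cells of which $2$ lie in the new losing pair, so $s = n - t - 2$ and thus $s + t \equiv n \pmod{2}$. Since $s$ is a nonnegative integer that strictly decreases each turn, it must eventually reach $0$; by the flipping invariant, this first happens on some turn $k$ with $k \equiv n \pmod{2}$. The next move, turn $k+1$, therefore belongs to the non-designated player. At that moment every empty cell lies inside a losing pair, so the non-designated player is forced to play unsafely, and the designated winner wins on the following turn. The main obstacle is making sure the parity argument survives the creation of multiple losing pairs in a single move; the odd-change lemma above is exactly what makes it robust, and the hypothesis that the designated winner always has a safe move outside of losing pairs guarantees that the strategy never breaks down before $s$ reaches $0$.
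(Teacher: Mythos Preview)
Your argument is correct and is essentially the same parity idea as the paper's, just made explicit: both rest on the fact that the cells trapped in losing pairs come two at a time, so it is the non-designated player who eventually finds every remaining empty cell inside a losing pair. The paper compresses this into a single sentence, while you spell out the invariant $s$ and its odd change per move.
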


\begin{proof}
Eventually, the player who sees an even number of empty squares has to use a cell in a losing pair and lose.
\end{proof}

Thus, the strategy for a designated winner is as follows: 
\begin{itemize}
\item Create a losing pair.
\item Find an algorithm to always have a safe move.
\end{itemize}

We call a situation when a board is one letter away from a win \textit{a brink}. This notion seems to be trivial, as in the original game, after a brink is created, the next player to move will win. However, this term is useful in variations of the original game. Creating a brink is an unsafe move.

\subsection{The solution to the original game}

In this case, a pair of empty cells between two S's is a losing pair. Indeed, if one places a letter inside a losing pair, then the other letter placed on the other cell is a win.

The following is the safe move algorithm for a designated winner.
\begin{itemize}
 \item   If possible, the player completes SOS on their turn.
 \item   Otherwise, since there are an odd number of empty cells at the beginning of the first player's turn, there must be either one empty square surrounded by two filled squares or one empty square surrounded by two empty squares. In both cases, the player places an O in that square --- it is easy to verify that the move is safe.
\end{itemize}

Now we have the final answer.

\begin{theorem}
\label{thm:SOS}
In the SOS game, the first player wins when $n \geq 7$ and odd. The second player wins when $n \geq 16$ and even. Otherwise, it is a draw.
\end{theorem}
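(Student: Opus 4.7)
The plan is to combine Theorem~\ref{thm:losingpaircreated} with explicit opening strategies for the two winning regimes, and to verify the drawing cases by direct analysis. Recall that a losing pair here consists of two S's separated by two empty cells. For the winning cases, the designated winner (player~1 for odd $n$, player~2 for even $n$) will create such a losing pair without ever making an unsafe move, and then invoke the safe-move algorithm already described; by Theorem~\ref{thm:losingpaircreated} this suffices.

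For odd $n \geq 7$, I would have player~1 open with an S at cell~$4$ (or symmetrically at cell $n-3$), leaving at least three empty cells on either side. A short casework on player~2's reply then shows that within a move or two player~1 can plant a second S at distance~$3$ from the first in a region free of nearby letters, producing a safe losing pair. For even $n \geq 16$, player~2 waits for player~1's opening letter and then places an S in an empty segment of length at least seven that is both far from the boundary and far from player~1's first letter; the numerical bound $n \geq 16$ is what guarantees that such a segment exists regardless of player~1's choice. In both cases, once the losing pair is in place, the safe-move algorithm carries the game to the forced loss of the opponent.

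For the drawing cases, I would handle $n \leq 6$ by noting that the board is too short to maintain a losing pair safely: either the pattern does not fit at all, or the opponent can immediately outflank any attempt to form it. The real obstacle is the even values $n \in \{8,10,12,14\}$, where player~2 has enough room to contemplate forcing a losing pair but not quite enough to succeed. Here I would exhibit an explicit blocking strategy for player~1: whenever player~2 places a new S, player~1 responds with an O at distance~$3$ (or at the mirrored position), which simultaneously blocks every nascent losing pair through that S and confines player~2's future placements. The case $n = 14$ is the tightest and will likely absorb most of the effort; a computer-assisted verification of the blocking strategy may be prudent here.
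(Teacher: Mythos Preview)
Your proposal follows essentially the same route as the paper: for the winning regimes you create a losing pair by an explicit opening (an S with three empty cells on each side) and then invoke the safe-move algorithm together with Theorem~\ref{thm:losingpaircreated}, and for the remaining small boards you fall back on direct checking. Two minor differences worth noting: in the even case the paper first splits on whether player~1's opening letter is S (in which case player~2 uses \emph{that} S to form the losing pair immediately, so no ``far from the boundary'' condition is needed) or O (only then is the seven-cell strip argument invoked); and for $n\in\{8,10,12,14\}$ the paper does not attempt any hand blocking strategy at all but simply reports a computer verification, so your proposed ``O at distance~3'' heuristic goes beyond what the paper does.
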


\begin{proof}
Suppose $n \geq 7$ and odd. The first player places S in the middle. After the second player's move, the first player can win or create a losing pair by placing an S to the left or right of the middle cell. After the losing pair is created, the first player wins by Theorem~\ref{thm:losingpaircreated}.

Suppose $n \geq 16$ and even. If the first player puts an S, the second player can create a losing pair. Suppose the first player puts an O somewhere. There still exists a strip of seven empty cells not neighboring O. The second player places an S in the middle of this strip. The next time the second player moves, they can create the second S for the losing pair that doesn't neighbor the given O. Such a move doesn't create an immediate win for the first player. After the losing pair is created, the second player wins by Theorem~\ref{thm:losingpaircreated}.

For boards of sizes less than 7, we can manually check that they are a draw. We wrote a program that checked that the game is a draw for boards of size 8, 10, 12, and 14.
\end{proof}

The original game is solved, but what if we use a different target string, say SOO?

\section{SOO}

Suppose the target string is SOO. 

\begin{theorem}
\label{thm:SOO}
The SOO game is always a draw.
\end{theorem}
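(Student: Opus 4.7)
The plan is to exhibit a single strategy $\sigma$ --- usable by either player --- that never loses. Since both players can adopt $\sigma$ and neither loses, the game must end in a draw. The strategy $\sigma$ is: if a move completing SOO is available, play it; otherwise, place an S in the rightmost empty cell of the board.

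First I would classify the brinks. In the SOO game, the three-cell windows that are one move away from SOO are exactly SOE, SEO, and EOO, where E denotes an empty cell. Crucially, whenever an S appears in a brink, it sits at the leftmost position of the window.

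The key lemma is that placing an S in the rightmost empty cell $i$ never creates a new brink. The only three-cell windows whose content changes are $(i-2,i-1,i)$, $(i-1,i,i+1)$, and $(i,i+1,i+2)$. In the first two, the new S lies at the rightmost or middle slot of the window, and no brink pattern admits S in those positions. In the third, a brink of type SOE or SEO would require cell $i+1$ or $i+2$ to be empty; but since $i$ is the rightmost empty cell, cells $i+1, \dots, n$ are all filled. An EOO brink there would require cell $i$ itself to still be empty, which it is not. Moreover, placing S at $i$ completes SOO precisely when $(i, i+1, i+2)$ was EOO before the move --- that is, precisely when a brink was already present. Hence whenever $\sigma$'s second clause actually fires (no brink anywhere), the S it plays is both safe and non-winning.

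Combining these observations, a player following $\sigma$ either wins immediately (exploiting a brink the opponent left) or plays a move that leaves the opponent no brink to exploit on their next turn. A straightforward induction on the move count shows the $\sigma$-player never loses. Since both players can simultaneously play $\sigma$, neither can force a win and the game is a draw; in fact, if both players follow $\sigma$ throughout, no brink ever forms, the board simply fills with S's, and SOO is trivially avoided.

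The main obstacle is the case analysis inside the key lemma: one must check, for each of the three windows touching the newly placed cell and each of the three brink patterns, that the rightmost-empty property forbids brink creation. Once that is pinned down, no parity or counting argument is needed --- the drawing strategy comes for free, in sharp contrast to the SOS game where Theorem~\ref{thm:losingpaircreated} was the main engine.
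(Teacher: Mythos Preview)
Your proof is correct and follows essentially the same approach as the paper: both use the strategy ``win if possible, otherwise place S in the rightmost empty cell,'' and both argue this move is safe. Your window-by-window case analysis is more explicit than the paper's one-line justification (``if the opponent could win after this move, we could have won on the previous move''), but the underlying idea is identical.
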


\begin{proof}
There is a strategy to prevent the other player from winning, which we now describe. If there is a winning move, then make it. Otherwise, place an S in the rightmost empty cell. This move is safe: suppose the opponent can win by placing a letter to the left of this S, then the original player could have won on the previous move.
\end{proof}

Did we cover all possible strings of three letters? The string SOS is equivalent to OSO by swapping the letters. Similarly, SOO is equivalent to OSS. But, in addition, we can swap left and right sides. This means Theorem~\ref{thm:SOO} also covers the cases of SSO and OOS. 

There are two strings left: SSS and OOO. By symmetry, we need to study only one case, say SSS. We do even more: we study games where the target string contains SSS as a substring.

\section{A string containing SSS}

Surely, if we proved a string $T$ to be a draw, it should immediately follow that any string containing $T$ must be a draw. Not so fast!

Consider the game of SOOS, which contains the string SOO. Our strategy for SOO involves the fact that we can always place S in the rightmost empty cell, and this is a safe move. However, for SOOS, this is not the case. For example, if the right end of our board contains SOEE, placing S in the rightmost square is unsafe. This makes the strategy for SOO not applicable to SOOS. However, we conjecture that SOOS is a draw, but this requires a separate proof. Moreover, after experiments with our SOS solver available at github \cite{GR}, we expect all games with target strings longer than three letters to be drawn. However, we digress.

We want to show that a string containing SSS is a draw. The above argument explains that it is not enough to prove that the game SSS is a draw. However, it works the other way; if we prove that a string containing SSS is a draw, then the SSS game is a draw.

\begin{theorem}
A game with a target string containing SSS is a draw.
\end{theorem}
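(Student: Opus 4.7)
The plan is to exhibit a strategy that prevents the substring SSS from ever appearing on the board. Since any $T$ containing SSS must, upon completion, display SSS somewhere, preventing SSS forbids the completion of $T$. Neither player can then win, regardless of the other letters of $T$, so the game ends in a draw.

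Concretely, I would use a pairing strategy and split by the parity of $n$. If $n$ is even, I hand it to the second player: pair the cells as $\{1,2\}, \{3,4\}, \ldots, \{n-1,n\}$, and whenever the first player plays in cell $c$, respond by placing O in the partner cell of $c$. If $n$ is odd, I hand it to the first player, who opens with O at cell $1$ and then pairs the remaining cells as $\{2,3\}, \{4,5\}, \ldots, \{n-1,n\}$, playing the same O-at-partner response for the rest of the game. A quick induction confirms that partners are filled in tandem within one round, so the responder's move is always in an empty cell.

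It then suffices to verify that every window of three consecutive cells $c, c+1, c+2$ contains at least one O. By the parity of $c$, one of the pairs $\{c,c+1\}$ or $\{c+1,c+2\}$ is a complete pair in the chosen pairing, and the responder has placed an O into that pair; the window $\{1,2,3\}$ in the odd-$n$ case is additionally covered by the opening O at cell $1$. Hence no three consecutive cells are all S, so SSS never appears and $T$ cannot be formed. The main step requiring care is the executability of the pairing — showing the partner is always empty when the responder needs it — but once that is in place the SSS-avoidance reduces to a one-line parity observation, and the conclusion that the game is a draw is immediate.
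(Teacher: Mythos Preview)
Your pairing argument is clean and correctly shows that \emph{one} player---the second player when $n$ is even, the first when $n$ is odd---can guarantee that SSS never appears on the board, so that $T$ is never completed. But ``the game is a draw'' means that \emph{neither} player has a winning strategy, and for that you must supply a non-losing strategy for \emph{each} player. Your argument only rules out a forced win for the non-responder: on an even board, for instance, you have prevented Player~1 from winning, but you have said nothing to exclude the possibility that Player~2 could abandon the O-response and force a win with some other line of play. The sentence ``Neither player can then win \ldots\ so the game ends in a draw'' conflates the outcome of one particular strategy with the game's value.

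The fix is not hard---for example, on an even board Player~1 can open with O at cell~1 and thereafter play O at the partner of Player~2's last move when that partner is empty, and otherwise open a fresh pair with O; an easy invariant then shows that the unique half-full pair always carries an O, so every completed pair contains an O and no window of three is all S. The paper sidesteps the parity split entirely by describing a single domino strategy (complete any half-filled domino with the \emph{opposite} letter, otherwise play O anywhere) that either player may adopt on any board. Your always-play-O response is a bit simpler per move, but the asymmetry it introduces between the two players is exactly what leaves the gap; once you patch it along the lines above, the two proofs are close cousins.
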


\begin{proof}
Each player has a strategy that prevents the other player from winning. More precisely, the strategy prevents the creation of three S's in a row.

We divide the grid into dominoes, where a \textit{domino} is defined as a pair of two consecutive cells. If there is an odd number of squares, let the leftmost square stand on its own. Here is the strategy. On their turn, if a player can win, then the player does so. Otherwise, if a domino exists with exactly one of the squares filled with a letter, the player fills the other square with a different letter. Otherwise, the player places an O anywhere.

After the player's turn, any S inside a domino has to be next to an O. Thus, if two consecutive S's exist in the leftmost part of the board, then the board should start as SSO. If two consecutive S's are somewhere else, they are part of the OSSO substring. 

Moreover, having an SES substring at any point in the game is impossible. Indeed, if a square is surrounded by two S's, it will be in at least one of the dominoes containing an S, and thus it has to contain an O. Therefore, there are no situations when one move can create an SSS substring. Thus, all games with one target string that contains SSS are drawn.
\end{proof}

We showed that target strings containing a run of length 3 of the same letter are a draw. But do we have other examples? Can we solve a string of length 4? Let's try SOSO.

\section{SOSO}

We start by reducing our work by half.

\begin{lemma}
\label{lemma:secondplayer}
To prove that a target string is always a draw, it is enough to prove that the second player can guarantee a draw.
\end{lemma}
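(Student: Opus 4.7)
The plan is to argue by strategy stealing. Suppose we already have in hand a second-player strategy $\sigma$ that prevents the first player from ever winning. The task then reduces to showing the first player also has a non-losing strategy, because once both players can avoid losing, neither side can force a win and the game is drawn under optimal play.

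Assume for contradiction that the first player has no non-losing strategy. By determinacy of finite perfect-information games, the second player then has a strategy $\tau$ that forces a win against every first-player play. I will derive a contradiction by constructing a first-player strategy that beats $\tau$. On the opening move, the first player places an arbitrary letter in an arbitrary empty cell; call this cell a ``phantom'' cell. Thereafter, the first player imagines an auxiliary game in which the phantom cell is still empty, interprets each real second-player move as the first move (or next first-player move) in this imagined game, and replies with $\tau$ as though they were the imagined second player. Whenever $\tau$ prescribes a move on a phantom cell, the first player instead plays any letter in any other empty cell and adds the new cell to the list of phantom cells, continuing to simulate $\tau$.

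The main obstacle is verifying that the accumulating phantom letters cannot sabotage the stolen play. In classical strategy-stealing settings like Hex or tic-tac-toe, extra stones of one's own color can only help, but here the symbols S and O are shared resources and an extra letter could in principle help either side complete the target string. The crux is to check that when $\tau$ announces a winning occurrence of the target string in the imagined game, the corresponding window of cells in the real game also spells the target string; this needs a brief case analysis on whether the winning window meets any phantom cell and, if so, whether the phantom letter placed there is compatible with the letter $\tau$ expects. Once this compatibility is established, the first player wins the real game, which contradicts the hypothesis that $\sigma$ prevents the first player from ever winning. The contradiction forces the first player to have a non-losing strategy, and together with $\sigma$ this shows the game is a draw.
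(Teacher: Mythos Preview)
Your strategy-stealing outline has a genuine gap that cannot be closed by the ``brief case analysis'' you defer. The failure is precisely at the obstacle you flag: in SOS-type games the letters are shared, so a phantom letter on the board can help your opponent just as easily as it helps you, and the argument breaks in both directions.

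Concretely, take target SOS and suppose your phantom move is an O in cell $4$. In the imagined game cell $4$ is empty, so $\tau$ has no reason to attend to it. The real second player, however, sees the O and can play S in cell $3$; $\tau$ responds somewhere, and then the second player plays S in cell $5$, completing SOS in the real game. Nothing in $\tau$ blocks this, because in the imagined game cell $4$ is still blank. So the second player can win the real game before $\tau$ ever declares a win in the imagined one --- a direction you do not address at all. And even the direction you do address (transferring a $\tau$-win to the real board) fails: the phantom letter was chosen arbitrarily, so there is no reason it matches the letter $\tau$ later expects in that cell.

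The paper's proof sidesteps strategy stealing entirely by finding a provably \emph{inert} opening move. Since the leftmost cell can only occur as position $1$ of an instance of the target string, placing there the letter that differs from the target's first letter (say O, if the target begins with S) guarantees that this cell can never participate in any win for either player. After this move the first player is effectively the second player on a board one cell shorter, and the assumed second-player drawing strategy (for all board sizes) finishes the argument. The key idea your attempt is missing is exactly this: a move that is guaranteed harmless, which strategy stealing cannot supply in games without the usual monotonicity.
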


\begin{proof}
Without loss of generality, we can assume that the first letter of the target string is S. The first player, on their first move, puts an O in the leftmost square. This letter can't be part of a winning string. This way, after the first move, the first player can follow the strategy for the second player, guaranteeing a draw on a board that is one cell shorter.
\end{proof}

To study the SOSO game, we introduce the notion of \textit{purging}, which we define as removing the leftmost two filled cells when these cells are filled in such a way that they cannot contribute to the creation of SOSO. This means we can remove the two cells with no consequence. For example, we can always purge OO. We can also purge SS and OS if they are followed by S. We can also purge SO followed by O. In other words, if the second and third letters are the same, we can purge the first two letters.

\begin{theorem}
The SOSO game is a draw.
\end{theorem}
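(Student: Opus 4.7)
The plan is to apply Lemma~\ref{lemma:secondplayer} to reduce the problem to exhibiting a drawing strategy for the second player. The natural candidate is a domino pairing strategy: for even $n$, partition the cells into dominoes $(1,2), (3,4), \ldots, (n-1, n)$, and have the second player respond to each first-player move ``letter $L$ in cell $c$'' by writing the same letter $L$ in the other cell of $c$'s domino. The invariant to maintain is that after every second-player move, each domino is either empty or contains two copies of the same letter (OO or SS); in particular, no domino ever reads SO or OS.

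Next I would verify that this invariant rules out SOSO. Split by the parity of the starting position $i$ of a hypothetical SOSO. If $i$ is odd, SOSO at positions $(i, i+1, i+2, i+3)$ fills exactly two consecutive dominoes, each of which would have to read SO, contradicting the invariant. If $i$ is even, SOSO spans three dominoes, and the middle one $(i+1, i+2)$ would have to read OS, again contradicting the invariant. The first player's own move cannot complete SOSO either: after the second player's move any empty cell lies in a $0$-filled domino, so the partner of the cell just played is also empty, but each of the four candidate positions for the first player's move either forces that empty partner to be pre-filled or forces an intermediate domino to read OS or SO, a contradiction in every case. So under this strategy neither player can ever write SOSO and the game is at worst a draw for the second player.

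The main obstacle is odd $n$, where any contiguous pairing leaves one cell unpaired. I would pair $(1,2), (3,4), \ldots, (n-2, n-1)$ with cell $n$ unpaired. Any SOSO avoiding cell $n$ is excluded as above, and any SOSO ending at cell $n$ lies at positions $(n-3, n-2, n-1, n)$, which forces the domino $(n-2, n-1)$ to read OS---blocked by the same invariant. The genuine subtlety is what the second player should do when the first player plays at cell $n$, since there is no paired response; here I would invoke the purging tool introduced just before the theorem, choosing a letter in the leftmost empty cell so that the emerging prefix of the board is purgeable and the remaining game reduces to one already covered by the pairing argument. Checking that this purging-guided neutral response always preserves the no-SOSO conclusion, against every possible sequence of first-player moves, is the technical heart of the proof.
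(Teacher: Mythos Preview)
Your plan for even $n$ is correct and is exactly the paper's argument: pair into dominoes, echo the letter, and observe that no SOSO can sit across two matched dominoes or straddle three of them.

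The odd case, however, has a real gap, and it stems from putting the unpaired cell on the \emph{right} while the purging tool acts on the \emph{left}. Concretely: suppose the very first move is the first player writing a letter in cell $n$. Your rule tells the second player to write in the leftmost empty cell, namely cell $1$. But cell $1$ is the left half of the domino $(1,2)$, so you have just voluntarily broken your own invariant on a domino. Nothing is purgeable yet (cell $2$ is empty), and now when the first player next plays---say in cell $2$, choosing whatever letter makes $(1,2)$ read SO or OS---your matching rule has no legal response: the partner cell is already filled by you. The phrase ``so that the emerging prefix of the board is purgeable'' hides exactly this difficulty; you never get to choose what sits in cell $2$, and you have not said what the second player does once the pairing is out of sync.

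The paper avoids this by putting the lone unpaired cell on the \emph{left}, so that the unpaired cell and the purging end coincide. When the first player plays in that leftmost cell, the second player responds locally---in cell $2$, $3$, or $4$ depending on cases---so that after at most two rounds the leftmost two or four cells are filled in a pattern that cannot participate in SOSO and can be discarded. After purging, what remains is again an odd board whose leftmost cell is the new unpaired cell and whose dominoes are exactly the old ones, so the strategy recurses cleanly. Your setup could be repaired by mirroring everything (unpaired cell on the right, purge on the right, second player responds near the right end), but ``respond in the leftmost empty cell'' does not do this and, as written, leaves the second player without a move in the scenario above.
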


\begin{proof}
By Lemma~\ref{lemma:secondplayer}, it is enough to show that the second player can guarantee a draw. 

Suppose the board size is even. The second player divides the board into dominoes. Whenever the first player places a letter inside a domino, the second player plays the same letter inside the same domino. At the end of the game, the board will consist of letters divided into pairs of doubles, so it can't contain SOSO.

Suppose the board is odd. The second player marks the leftmost cell and divides the rest into dominoes. If the first player puts a letter inside a domino, the second player repeats it. Whenever possible, the second player purges the two leftmost filled cells.

Suppose the first player puts a letter in the leftmost cell. If there are no more empty cells, the game is a draw. If the first domino to the right of the first cell is not empty, then it is a double, so we can purge the first two letters. If some other dominoes to the right are filled, we continue purging until we get one letter followed by an empty domino. Thus, we can assume that the leftmost domino is empty.

Suppose the first player puts an O in the leftmost cell, then the second player plays an O in the next cell to the right and can purge again. Suppose the first player puts an S in the leftmost cell. Then the second player places an O in the third cell. Now the board starts with SEO, followed by dominoes. If the first player places a letter in any empty domino, the second player plays the same letter in the same domino.

Suppose the first player places a letter in the second cell. If this letter is an O, then we can purge. If the letter placed in the second cell is S, then if the next domino to the right is filled, we can purge. If the next domino is empty, we have the beginning of the board as SSOEE. Then the second player plays an O in the fourth cell and can purge the first four letters. 

In every case, each second player's move is safe; thus, the game is a draw.
\end{proof}

We've tried a variety of strings as target strings so far. What if we have not one but two target strings?

\section{SSSS-OOOO}

Now we study a game where a player wins if and only if one of the strings SSSS or OOOO is achieved. We call this game SSSS-OOOO.

\begin{theorem}
The SSSS-OOOO game is a draw.
\end{theorem}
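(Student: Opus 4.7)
The plan is to exhibit a drawing strategy for the second player using a pairing argument, and then transfer it to the first player via the wasted-move idea from Lemma~\ref{lemma:secondplayer}. For even $n$ I would pair the cells as adjacent dominoes $\{1,2\}, \{3,4\}, \ldots, \{n-1,n\}$ and have the second player respond to each first-player letter by writing the opposite letter in the partner cell of the same domino. The invariant is that after each second-player move every domino is either empty or contains one S and one O, and since every four-consecutive-cell window contains at least one complete domino fully inside (two if the window starts at an odd index; the middle one $\{i+1,i+2\}$ if it starts at an even $i$), the SO/OS structure inside that complete domino rules out SSSS and OOOO.

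For odd $n$ I plan to use the same pairing on cells $1,\ldots,n-1$ with cell $n$ kept as a singleton. Moves inside a domino are handled exactly as before; when the first player plays in the current singleton $s$, the second player writes the opposite letter in cell $s-1$ and re-pairs by dissolving the old pair $\{s-2,s-1\}$, declaring $\{s-1,s\}$ a new complete domino, and promoting cell $s-2$ to the new singleton, so the singleton cascades leftward. The main obstacle is the case when this cascade cannot proceed because cell $s-1$ was already filled earlier; there I would argue that no three-in-a-row exists (completed dominoes being SO/OS prevent it), so the first player has no immediate win, and have the second player fall back by playing in the rightmost still-empty cell with the letter opposite to its right-hand filled neighbor, which creates a new complete pair locally. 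Verifying that this fallback always preserves the ``no three-in-a-row'' invariant at every reachable position---so that the first player never sees a completable SSSS or OOOO---is the technical heart of the odd-$n$ argument.

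To give the first player a drawing strategy, I would apply the idea of Lemma~\ref{lemma:secondplayer} in an extended form (our game has two target strings, not one): the first player starts by writing O in cell $1$ and then follows the second player's strategy on the $(n-1)$-cell subboard consisting of cells $2,\ldots,n$. The wasted O cannot be part of any SSSS, and the only OOOO it could occur in is the one at positions $1$--$4$; but cells $2$ and $3$ lie in the same domino under the induced pairing on the subboard, so they carry opposite letters, at most two of cells $2,3,4$ are O, and that OOOO is blocked. Combining the two strategies shows the SSSS-OOOO game is a draw.
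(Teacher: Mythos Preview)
Your even-$n$ argument for the second player is correct and is cleaner than what the paper does: the domino invariant together with the observation that every length-$4$ window contains a whole domino immediately rules out SSSS and OOOO. (For completeness you should also note that the \emph{first} player cannot complete a target either: after the first player fills one cell of an empty domino, the window's internal domino is either empty, half-filled, or SO/OS, so the window is never monochrome. This is easy but you did not say it.)

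The odd-$n$ case is where the proposal has a genuine gap. Your cascade works only while the cell $s-1$ to the left of the current singleton is empty, but, as you note, the domino $\{s-2,s-1\}$ may have been completed long before. At that point you switch to ``rightmost empty cell, opposite letter to its right neighbor.'' You neither specify how to re-pair after this fallback---the cell you just filled is the right half of an old domino whose left half is still empty---nor prove that the ``no three in a row'' invariant survives this move and all subsequent play. You flag this as ``the technical heart,'' and it really is: without it the odd case is only a sketch. Your extension of Lemma~\ref{lemma:secondplayer} to two targets also relies on cells $2$ and $3$ staying in the same domino of the subboard, which is precisely what the shifting singleton can disrupt when the subboard has odd length.

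The paper avoids both the parity split and the wasted-move transfer by giving one safe-move recipe that \emph{either} player may follow: if some domino is half-filled, complete it with the opposite letter; otherwise play in the rightmost empty cell, choosing the letter opposite to its right neighbor (or anything if there is none). It then verifies, via a short case analysis on where the just-played letter sits in its domino and what lies to its left and right, that the opponent cannot finish SSSS or OOOO on the next move. In other words, your ``fallback'' rule is the paper's main rule for the non-completion case, and the paper carries out the safety verification you left open. The paper's route buys uniformity---one argument for both players and both parities---at the cost of a move-by-move safety check rather than the clean global invariant you used for even $n$.
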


\begin{proof}
Each player has a strategy that prevents the other player from winning.

We divide the grid into dominoes. If there is an odd number of squares, let the leftmost square stand on its own. On their turn, if a player can win, then the player does so. Otherwise, if a domino exists with exactly one of the squares filled with a letter, the player fills the other square with a different letter.

If none of these are true, the player puts a letter in the rightmost available square according to the following rule. If the empty square is the rightmost square of the board, the player puts any letter there. Otherwise, the player puts a letter that differs from the letter to the right. Suppose this letter is X. Then, to the left of X, all dominoes are either empty or contain different letters. To the right of the X, all dominoes are filled. Moreover, if there exists a domino with both letters the same, the next letter to the right is different, or the border is to the right.

Now we prove that this strategy is guaranteed to be safe. Let's assume the last letter placed is X. The opponent cannot win without using the letter X in their move; otherwise, the current player would have won on the previous move. Suppose X was the first letter placed into a domino; then, the letter to the right of X either doesn't exist or is different from X. Thus, the only option is to use X and the three spaces to the left. There are three possibilities for these: EEEX, SOEX, and OSEX. All these cases are safe.

Suppose X is the second letter placed into a domino. If it is the left letter, it differs from the right one. Then, whether the domino on the left is empty or contains different letters, the opponent can't finish the target string on the next move. Suppose X is the right letter in a domino. Then to the right of it, there are three cases: a) an empty domino or the border; b) a filled domino with two distinct letters; c) a filled domino with two same letters, but in this case, the next letter must either not exist or be different. In all of these cases, the move is safe.

Thus, all games with the two target strings being SSSS and OOOO are draws.
\end{proof}

We've tried different strings. We've even tried two strings as a goal. But what if we change the goal of the game? What if the player who gets the target string loses? Such games are called mis\`{e}re games. In combinatorial game theory, mis\`{e}re games are often more difficult to analyze. Is this also true in our case?

\section{Mis\`{e}re games}

In this version, the person who first gets the target string loses. Let's try a simple example. If the target string is SSS, then a player can never lose by always playing an O. Now we try a more complicated example where our target string is SOS.

\begin{theorem}
The mis\`{e}re SOS game is a draw for $n \geq 3$.
\end{theorem}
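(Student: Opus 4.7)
The plan is to prove the single claim \emph{there is always a safe move}: at any board state with at least one empty cell, the player to move can place a letter without creating the substring SOS. From this, by induction on the number of moves, neither player is ever forced to create SOS, and play continues until the board is full, which in the misère setting is by definition a draw.

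The main work is to verify the claim, and I would prove it by a local analysis at an arbitrary empty cell $i$. Consider the two candidate moves ``S at $i$'' and ``O at $i$.'' Because O appears only as the middle letter of SOS, placing O at $i$ creates SOS if and only if $1 < i < n$ and both cells $i-1$ and $i+1$ currently contain S. Assume we are in this configuration; I would then check each of the three SOS triples containing position $i$ after placing S (rather than O) at $i$. The triples $(i-2,i-1,i)$ and $(i,i+1,i+2)$ would need their middle cell to be O, but by assumption those middle cells are the fixed S's at $i-1$ and $i+1$, while the triple $(i-1,i,i+1)$ reads SSS. So in this subcase ``S at $i$'' is safe. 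If instead ``O at $i$'' was already safe, there is nothing to do. Either way at least one of the two moves at $i$ is safe.

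Putting the pieces together, both players follow the strategy ``pick any empty cell and play the letter that does not complete SOS,'' which the claim guarantees is always possible while an empty cell remains. Hence SOS is never created, the board fills, and the game is a draw. The hypothesis $n \ge 3$ enters only to make the statement non-vacuous, since for $n < 3$ the substring SOS cannot appear in any configuration. I do not anticipate a serious obstacle: the argument is entirely local, and the only boundary subtlety (when $i = 1$ or $i = n$) makes ``O at $i$'' trivially safe because the SOS triple centered at $i$ does not fit on the board, so the claim holds even more easily.
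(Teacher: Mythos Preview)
Your proposal is correct and follows essentially the same argument as the paper: pick any empty cell, place O unless both neighbors are S, in which case place S; either way no SOS is created. You have simply made the local case analysis (the three triples through $i$, and the boundary cells) explicit where the paper leaves it to the reader.
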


\begin{proof}
Consider an empty cell. If the cell is surrounded by S's, we can place an S there without creating an SOS. Otherwise, we can place an O without creating an SOS. Thus, in any case, any player always has a non-losing move.
\end{proof}

It sounds as if avoiding a string is easy. Is it ever possible to lose? Theoretically, it could be.

A \textit{losing square} is a square in mis\`{e}re games where placing either S or O there creates the target string. For example, in the mis\`{e}re SO game, the empty square in SEO is a losing square. We leave it as an exercise for the reader to show that the SO mis\`{e}re game on a 1-by-6 board is a win for the first player.

Similar to the SO game, in the mis\`{e}re SOO game, the empty square in SEOO is a losing square. However, this game too is always a draw.

\begin{theorem}
With optimal play, the mis\`{e}re SOO game is a draw.
\end{theorem}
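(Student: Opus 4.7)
The plan is to adapt the overall template of the SOO and SOSO proofs: reduce the problem to the second player, and then describe an explicit non-losing strategy. The reduction is the same argument used in Lemma~\ref{lemma:secondplayer}: the first player writes O at the leftmost cell, which cannot appear anywhere in the substring SOO (it is not S, and there is no cell to its left to play either of the other two roles of SOO), so this first move is truly neutral, and the first player can then adopt the second-player strategy on the $(n-1)$-cell board to its right. Thus it suffices to give the second player a non-losing strategy.

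For the second player I would try a domino-pairing strategy. Partition the remaining board into dominoes $(1,2),(3,4),\dots$ (with a leftover cell on the left if the length is odd), and whenever the first player plays in a domino, respond in the same domino so that the completed domino has an S in its left cell. Under this invariant every three-letter window on the board either sits entirely inside one domino (impossible, since dominoes have length two) or crosses a single boundary, in which case at least one of its last two letters is a ``left-of-domino'' cell, hence an S; since the last two letters of SOO are both O, no such window can match SOO. The straightforward responses are: if the first player writes anything on the right of a fresh domino, or writes S on the left, the second player writes S in the remaining cell; a quick check shows these responses are themselves safe.

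The main obstacle---and the step I expect to require the most delicate work---is the case where the first player writes O on the left of a fresh domino, breaking the invariant. Here a literal in-domino response can itself create an SOO across the boundary with a previously completed neighboring domino (for instance the dangerous configuration with an SS domino on the left, a fresh domino in the middle, and a partially built OS or OO pattern on the right). To handle this I would borrow the purging idea from the SOSO proof: once the leftmost cells begin with an O, they cannot serve as the leading S of any future SOO and can be mentally deleted, restarting the strategy on a shorter board. For the residual cases where no purge is directly available, the second player plays a safe move in a different empty cell chosen by a short case analysis analogous to the table in the SSSS-OOOO proof, re-pairing on the fly. Carrying out this case analysis---verifying that a safe response always exists for every configuration of the first player's move, and that the reconfiguration still maintains the (possibly weakened) invariant---is the hard part I foresee. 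Once done, the argument shows that the second player never creates SOO and always has a safe move, so the game ends with a full board containing no SOO, which is a draw.
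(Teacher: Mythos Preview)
Your proposal has a genuine gap: the case where the first player writes O on the left cell of a fresh domino is precisely the crux, and you do not resolve it. You yourself flag it as ``the hard part I foresee'' and only gesture at purging or on-the-fly re-pairing without carrying out the analysis. This is not mere bookkeeping. Once the first player puts O on a left cell, your invariant is broken; if you respond outside that domino to preserve safety, the first player is now free to open a second fresh domino on the next move, and the one-to-one pairing that drives the whole argument collapses. You also never say what the second player does when the first player plays in the leftover singleton cell on an odd-length board. As written, the argument does not establish the theorem.

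The paper's proof is much shorter and sidesteps all of this machinery. There is no reduction to the second player and no domino pairing. The strategy, usable by \emph{either} player, is: always play S; if the opponent's last move was an O whose left neighbor is empty, put S in that left neighbor, and otherwise put S in the rightmost empty cell. Since you only ever write S, the sole way you could lose is by placing an S immediately to the left of an existing OO block. But every O on the board was written by the opponent, and the rule guarantees that immediately after each such O appears, its left neighbor is already filled or is filled by your very next move; hence the pattern ``empty cell followed by OO'' can never confront you on your turn. That single observation is the entire proof.
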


\begin{proof}
Here is the strategy. If the previous move was an O, and the cell to the left of this O is available, then place an S to the left of the O. Otherwise, place an S in the rightmost empty cell. Now we show that a player can't lose with this strategy.

As the player with this strategy always places an S, the only potential way to lose is if there exist two Os in a row with an empty cell on the left. However, one of these Os was played in one of the previous moves, which means it has to have an S on the left, proving that the strategy works.
\end{proof}

We changed the string, and then we changed the goal. What if we change the board? In the next section, we turn our board into a circle.

\section{Loop-Around Board}

In this section, we change the board, so that the board loops around, but we go back to the original target string SOS. For example, for $n=5$, the board OSEES counts as a win.
This extra twist doesn't change any results that are already wins because this doesn't restrict or change anything about a winning strategy that works, but some draws can turn into wins because it's much easier to create a losing pair with a loop-around board. Thus, we need to check odd boards of sizes 3 and 5 and even boards of sizes less than 16.

Suppose $n=3$, the first player places an S. Then, on the next turn, the first player plays an opposite letter to what the second player played. Thus, the board has two S's and one O. This is a win for the first player. We leave it as an exercise for the reader to check that the boards of sizes 4, 5, 6, and 8 are a draw.

\begin{theorem}
In the SOS game on a circular board of size $n$, the first player wins when $n \geq 7$ and odd. The second player wins when $ \geq 10$ and even. The first player wins for $n=3$. Otherwise, it is a draw.
\end{theorem}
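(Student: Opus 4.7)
The plan is to mirror the proof of Theorem~\ref{thm:SOS}, funneled through Theorem~\ref{thm:losingpaircreated}, while exploiting the rotational symmetry of the circular board and the absence of boundary effects. The cases $n=3$ and $n\in\{4,5,6,8\}$ are already handled by the paragraph immediately preceding the theorem (explicit argument for $n=3$; manual check left as an exercise for the rest), so the real content is the odd $n\ge 7$ statement and the even $n\ge 10$ statement.

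For odd $n\ge 7$ the first player is the designated winner. I would have $P_1$ open by placing an S in cell $1$; by rotational symmetry this is without loss of generality. After $P_2$'s reply, the two filled cells split the circle into two arcs, at least one of which contains four or more consecutive empty cells because only two cells are filled and $n\ge 7$. $P_1$ then places a second S three cells from cell $1$ along that longer arc, producing the pattern S\,E\,E\,S, which is the losing pair from Section~1. The main verification is safety of this second S: one must check that it does not hand $P_2$ an immediate SOS completion. I would case-split on whether $P_2$'s reply was S or O and on its distance from cell $1$, and in each case either place the losing pair on the clean side of the circle, or observe that $P_1$ already has an SOS-completing move. Once the losing pair exists, $P_1$ follows the safe-move algorithm of the linear proof: complete SOS if possible, else place O in an empty cell surrounded either by two filled or by two empty cells. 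The parity argument justifying the existence of such a cell (some maximal empty run must have odd length when the total number of empty cells is odd) carries over verbatim to a cycle, and Theorem~\ref{thm:losingpaircreated} concludes.

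For even $n\ge 10$ the second player is the designated winner, and I would split on $P_1$'s first letter. If $P_1$ plays S in some cell, $P_2$ responds with S three cells away along the circle, immediately creating a losing pair; the safety check is of the same flavour as in the odd case. If $P_1$ plays O in cell $i$, then the cells outside $\{i-1,i,i+1\}$ form a single arc of length $n-3\ge 7$ consisting entirely of empty cells, none of which neighbour the O. $P_2$ plants an S in the middle of this arc; on a later turn $P_2$ plays a second S at circular distance $3$ along the same arc, creating a losing pair that avoids $P_1$'s O. The bound $n\ge 10$ is exactly what is needed for this arc to contain seven cells, mirroring the role of $n\ge 16$ in the linear case (where the seven cells had to fit on a \emph{single} side of the O). After the losing pair is in place, $P_2$ runs the same safe-move algorithm and wins by Theorem~\ref{thm:losingpaircreated}.

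The main obstacle I anticipate is the safety check for the losing-pair-creating move. On a cycle, placing the second S three cells from the first S influences \emph{both} arcs, so the mover must rule out that the opponent can complete SOS on the shorter arc by exploiting a letter planted earlier. For the threshold values $n=7$ and $n=10$ the arcs are tight, so this reduces to a small finite case analysis on the opponent's preceding move; I expect that in each tight case the designated winner either has an alternative losing-pair direction or can win immediately, which is exactly why the thresholds land at $7$ and $10$ rather than anywhere larger.
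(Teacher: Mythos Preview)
Your proposal is correct and follows essentially the same route as the paper. The paper is even terser: for odd $n\ge 7$ it simply asserts that the linear-board winning strategy carries over to the circle, while for even $n\ge 10$ it gives exactly your case split on $P_1$'s first letter (S $\Rightarrow$ immediate losing pair; O $\Rightarrow$ place an S in the middle of the seven-cell arc away from the O and its two neighbours), leaving the safety verifications implicit just as you do.
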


\begin{proof}
The concept of a losing pair holds in this case. And the proof that the losing pair leads to a win depending on parity also works. That means a winning strategy for a standard game of SOS continues working here. We only need to check the small boards that are drawn. We assume that the reader finished the exercise above and checked that boards of sizes below 9. What is left to do is to study even boards of sizes 10, 12, and 14.

Suppose the board is even. If the first player places an S, then the second player can create a losing pair on any board of size at least 4. Thus, the first player places an O. Suppose $n > 8$. Then excluding O, placed by the first player and the two neighboring cells, we have at least 7 cells in a row left. The second player can place S in the middle and will be able to create a losing pair on the next step. Thus, the second player wins. This concludes the proof.
\end{proof}

In this section, we changed the board's shape, but what if we added another dimension to the board?

\section{A 2-by-$n$ Board}

Now we play SOS on a 2-by-$n$ board.

Though this game is 2-dimensional, it is actually simple because there are no possible vertical or diagonal lines that could result in an SOS. Thus, even if the rules might allow vertical or diagonal wins, only horizontal wins can happen here. On a 2-by-$n$ board, the total amount of squares is always even, which means, given our intuition for the one-dimensional game, player 2 might have an advantage. 

\begin{theorem}
The SOS game on a 2-by-$n$ board is a draw for $n < 7$ and a win for the second player otherwise.
\end{theorem}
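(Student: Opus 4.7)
The plan is to apply Theorem~\ref{thm:losingpaircreated}. Since the $2 \times n$ board has $2n$ cells (always even), the second player is the designated winner, so it suffices for them to create a losing pair without ever making an unsafe move, and always to have a safe move outside any losing pair. A useful preliminary observation is that an SOS pattern can only occur horizontally: the board is only two cells tall and no diagonal of length three fits, so the game decomposes into two $1$-dimensional SOS games of length $n$, linked only by turn order.

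For $n \geq 7$, I would prescribe the following strategy for the second player. Without loss of generality, suppose player $1$'s opening move is in row $1$. The second player responds with S in column $4$ of row $2$. On their next move, unless player $1$ has just created a brink (in which case the second player completes SOS and wins immediately), the second player places a second S at column $1$ or column $7$ of row $2$, producing the pattern S\,E\,E\,S and thus a losing pair. This mirrors the $1$-dimensional first-player winning strategy of Theorem~\ref{thm:SOS}, executed by the second player on the ``opposite'' row. A short case analysis --- conditioned on where and what letter player $1$ plays as their second move --- verifies that this second S is safe and that at least one of columns $1$ and $7$ is always an available safe choice. Once the losing pair is locked in, the second player plays out the game by winning outright when possible, immediately cashing in any brink the opponent creates, and otherwise placing a safe letter in a non-losing-pair cell. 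With $n \geq 7$, the board still has ample empty space, and a domino-pairing argument on the remaining empty cells guarantees that such a safe move exists at every turn. Theorem~\ref{thm:losingpaircreated} then forces player $1$ eventually to play inside the losing pair, handing the win to the second player.

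For $n < 7$, the rows are too short to accommodate S\,E\,E\,S at a safe distance from the boundaries, so neither player can force a losing pair. The cases $n = 1, 2, \ldots, 6$ are small enough to check exhaustively --- the SOS solver referenced earlier handles them --- and each turns out to be a draw.

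The main obstacle is verifying that the second player always has a safe move outside the losing pair once it has been set. In principle, player $1$ might try to build simultaneous threats in the row not containing the losing pair while the second player is committed to row $2$. The saving grace is that any brink player $1$ creates is itself a losing move (the second player immediately plays the completing letter), so under rational play enough room remains that a domino-pairing on the surviving empty cells supplies the required safe move. The bulk of the work is making this case analysis rigorous.
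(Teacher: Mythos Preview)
Your approach for $n \geq 7$ is essentially the paper's: the second player plants an S in the row opposite the first player's opening move, then on the next turn completes an S\,E\,E\,S losing pair on the untouched side. The paper phrases this as ``place S in the middle of the other row'' rather than fixing column $4$, but the content is identical. The safe-move guarantee you flag as the ``main obstacle'' is dispatched in the paper simply by invoking the one-dimensional safe-move algorithm (the odd-empty-cell argument from the original SOS solution), since the two rows are independent except for turn order; no new domino-pairing analysis is needed.

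Where you diverge is the $n < 7$ case. You defer to exhaustive computer search, whereas the paper gives a short constructive argument that avoids any case check: the second player always responds in the same row as the first player's previous move, so each row is an independent $1 \times n$ game in which the second player is playing second --- a draw by Theorem~\ref{thm:SOS}. Symmetrically, the first player places an O on an edge cell of one row and thereafter mirrors into the row of the second player's last move, reducing to a $1 \times n$ and a $1 \times (n-1)$ game with the correct parity, again draws. This row-following trick is the idea you are missing; it is cleaner than a brute-force verification and explains \emph{why} the small cases are draws rather than merely confirming that they are.
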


\begin{proof}
Suppose $n < 7$. For the second player obtaining a draw is very simple as the player can separate the game into 2 separate 1-by-$n$ games, which are both draws as proven in Theorem~\ref{thm:SOS}. This can be done by making moves only in the same row where the first player made the previous move. 

The first player can place an O on the edge of one of the rows and, after that, play only in the row where the second player played his last move. This way, the first player essentially plays two games on two boards, 1-by-$n$ and 1-by-$(n-1)$. Thus, by Theorem~\ref{thm:SOS}, the first player can guarantee a draw too.

Thus, both players can prevent the other player from winning for $n < 7$.

Suppose $n \geq 7$. If a losing pair is created, then the game is won by player 2 on any 2-by-$n$ board as it has an even number of squares, and thus, the first player would be the one to place something inside one of the losing pairs first. This means that all the second player must do to win is to create a losing pair. This can be done by putting an S in the middle of the row the first player didn't go in. Then, on the second turn, the second player is guaranteed to be able to make a losing pair by placing an S on the side of the S placed on the first turn that the first player hasn't touched, thus creating a losing pair. The second player should always have a safe move for the same reason there is a safe move for the second player on an even 1-dimensional board. 

Since the second player is able to make a losing pair for any $n \geq 7$ and always has a safe move, the second player wins on such boards.
\end{proof}

We can go further. On any 2-by-$n$ board, as long as the target string is longer than 1 letter, player 2 will never lose. Indeed, player 2 can implement the following strategy:
\begin{itemize}
\item If player 2 can win, he does so;
\item Otherwise, player 2 can repeat player 1's last move in the other row.
\end{itemize}
This works because if player 1 plays a safe move, player 2 copies the safe move in the other row. If player 1's move is not safe, then player 2 wins.

We covered loop-around boards and 2d boards, but we had a crazy idea: What if the board is changing with each move?

\section{Expandable SOS}

In this game, the grid expands with each turn. It starts as a 1-by-1 board, and the first player places their letter. After the odd turns, one row at the bottom is added. After even turns, one column to the right is added.

In this version, the first player always wins if they play optimally. We leave it as a simple exercise for the reader to show that the second player wins if the first player starts with an S. Now, we show the winning strategy for the first player.

The first player places an O. We divide the rest into two cases.

\textbf{Case 1.} Suppose the second player places an S underneath. Then, the first player places an O to the right of the S. It doesn't matter what the second player does next. On the fifth move, as soon as the third column appears, the first player completes the SOS in the second row. Table~\ref{table:case1} shows the state of the game after the third and fifth moves, respectively. Question marks show where the second player might have placed a letter.

\begin{table}[ht!]
\begin{center}
\begin{tabular}{|c|c|}
\hline
O &  \\ \hline
S & O \\ \hline
\end{tabular}
\quad
\begin{tabular}{|c|c|c|}
\hline
O & ? & \\ \hline
S & O & S \\ \hline
? & ? & \\ \hline
\end{tabular}
\end{center}
\caption{The game's progress in the first case.}
\label{table:case1}
\end{table}

\textbf{Case 2.} Suppose the second player places an O.  Then, the first player places an S in the first row, as seen in Table~\ref{table:case2}, on the left. It doesn't matter what is played on the fourth move, as that move can't be a win. The first player wins if the second player makes an unsafe move on the fifth move. Otherwise, the first player places an O in the first row third column, creating a brink; see Table~\ref{table:case2} middle. But the second player can't finish the SOS as the fourth cell in the first row is unavailable. The second player can't win anywhere, as the previous move was safe, and new cells that opened up don't help. The first player wins on the next move; see Table~\ref{table:case2} right.

\begin{table}[ht!]
\begin{center}
\begin{tabular}{|c|c|}
\hline
O & S\\ \hline
O &  \\ \hline
\end{tabular}
\quad
\begin{tabular}{|c|c|c|}
\hline
O & S & O \\ \hline
O & ? &  \\ \hline
O & ? &  \\ \hline
\end{tabular}
\quad
\begin{tabular}{|c|c|c|c|}
\hline
O & S & O & S \\ \hline
O & ? & ? &  \\ \hline
O & ? & ? &  \\ \hline
S & ? & ? & \\ \hline
\end{tabular}\\
\caption{The game's progress in the second case.}
\label{table:case2}
\end{center}
\end{table}

We started by discussing an example of this game and ended by proving the following theorem.

\begin{theorem}
The expandable SOS game is a win for the first player.
\end{theorem}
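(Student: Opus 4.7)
The plan is to exhibit an explicit short winning strategy for the first player, branching on the second player's reply to the forced opening. The first player opens with O at $(1,1)$; after the row expansion, the board is 2-by-1 and the second player's only empty cell is $(2,1)$. The strategy splits on whether the second player plays S or O there.

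In Case 1 (second player plays S at $(2,1)$), I plan to have the first player play O at $(2,2)$, so the second row reads SO. After the row expansion and the second player's turn-4 move on the resulting 3-by-2 board, the column expansion delivers cell $(2,3)$ empty, and the first player plays S there to complete SOS in row~2. The verification is that no turn-4 placement by the second player can itself complete SOS; a brief case check over the three available cells and two letter choices confirms this, and second-player threats are irrelevant because first player's turn-5 move is an immediate win.

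In Case 2 (second player plays O at $(2,1)$), I plan to have the first player play S at $(1,2)$, so row~1 reads OS. On turn 5, after the second player's turn-4 move and the column expansion, the first player plays O at $(1,3)$. This produces the first-row pattern OSO --- ordinarily an unsafe brink, since placing S at $(1,4)$ would complete SOS at positions 2--4. But only a row is added after turn~5, so $(1,4)$ does not yet exist when the second player moves on turn~6: the brink cannot be blocked. After the column expansion following turn~6, the first player plays S at $(1,4)$ and wins.

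The main obstacle is the finite case verification in Case~2: that the second player cannot win on turn~4 on the 3-by-2 board, that the first player's turn-5 move creates no secondary threat that the second player can exploit on turn~6, and that the cells of the newly added fourth row do not allow a second-player winning reply on turn~6. Because the only filled cells entering these checks are $(1,1)=\mathrm{O}$, $(1,2)=\mathrm{S}$, $(2,1)=\mathrm{O}$, and later $(1,3)=\mathrm{O}$, the possible partial SOS patterns are heavily constrained, and each subcase reduces to inspecting a short list of placements; the argument is routine but must be tabulated carefully to rule out every counter-threat.
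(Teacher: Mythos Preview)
Your Case~1 is correct and matches the paper's argument. The gap is in Case~2: the unconditional turn-5 move $\mathrm{O}$ at $(1,3)$ loses against best defense.

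Concretely, suppose on turn~4 the second player plays $\mathrm{O}$ at $(2,2)$. The $3\times 2$ board now has column~2 reading $\mathrm{S},\mathrm{O},\text{empty}$. After the column expansion, you have the first player play $\mathrm{O}$ at $(1,3)$ on turn~5. A row is added, and on turn~6 the second player plays $\mathrm{S}$ at $(3,2)$, completing $\mathrm{SOS}$ vertically in column~2 (cells $(1,2),(2,2),(3,2)$). The same failure occurs if the second player plays $\mathrm{S}$ at $(3,2)$ on turn~4: after your $\mathrm{O}$ at $(1,3)$, the second player plays $\mathrm{O}$ at $(2,2)$ on turn~6 and again wins in column~2. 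Since the expandable game is two-dimensional, vertical (and diagonal) completions count --- this is consistent with the paper's treatment of the $2$-by-$n$ board and is exactly why the paper's own proof inserts an extra step in Case~2.

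The fix is the one the paper uses: on turn~5 the first player should first check whether the second player's turn-4 move was unsafe and, if so, win immediately (e.g.\ by playing in column~2 to complete $\mathrm{SOS}$ there). Only when the turn-4 move was safe does the first player fall back to $\mathrm{O}$ at $(1,3)$, and in that residual case the tabulation you describe does go through. Your proposal's claim that the verification is ``routine'' is thus incorrect as stated: the unconditional line you wrote down does not survive the case check, and the branch on safety of the opponent's fourth move is an essential part of the argument.
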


However crazy this idea was, we have yet another idea: What if the players were limited in placing the letters in certain squares depending on the previous move?

\section{Restricted-Placement SOS}

The idea of this game comes from the ultimate game of tic-tac-toe. Each move restricts where the next player can move, so we call this game the \textit{restricted-placement SOS}.

We play on a 3-by-3 square grid. The first player can start in any position. The next player has to play in the row corresponding to the previous move's column. The player who finishes SOS in a row, column, or diagonal wins.

This game differs from other games so that a player sometimes is unable to move despite having empty cells on the board. We provide an example in the next paragraph.

Suppose the first player places an S in row 1, column 1. The second player has to move in the first row and places an S in row 1, column 2. The first player needs to go in the second row. So the first player places an S in row 2, column 1. The second player has to move in the first row, where only one cell is left. So the second player places an S in row 1, column 3. Now, the first player must go in the third row and places an S in row 3, column 1. The second player must play a move in the first row, but it is completely filled.

However, with the correct play, the first player always wins.

\begin{theorem}
In the restricted-placement SOS game, the first player wins.
\end{theorem}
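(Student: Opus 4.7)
The plan is to exhibit an explicit winning strategy for Player 1 and verify it by case analysis on Player 2's legal responses. The central observation that makes such a strategy feasible is that Player 1's move completely determines the row Player 2 must play in next, so Player 1 enjoys substantial control over the flow of the game; moreover, the board has an 8-fold symmetry group and the target SOS is invariant under swapping S and O (since SOS reversed is still SOS), so the apparent case count shrinks considerably.

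First, I would fix a good opening for Player 1. The two most natural candidates are the center cell (2,2) and a corner such as (1,1). The center opening forces Player 2 into row 2, after which whatever column Player 2 picks sends Player 1 into row 1 or row 3, both still completely empty; the corner opening has the complementary virtue, already exhibited in the text's introductory example, of letting Player 1 push Player 2 toward row-exhaustion so that Player 2 eventually has no legal move at all. I would try both and pick whichever opening minimizes branching; the first-move letter (S versus O) can also be chosen to exploit the observation already made in the text that starting with S may be bad, so I would start with O.

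Next, for each of Player 2's legal responses, reduced modulo the symmetries above, I would describe a concrete continuation that achieves one of three outcomes: (i) a double threat, in which Player 1 creates two disjoint SOS-completions and Player 2's single reply can block at most one; (ii) a forced-row trap, in which the last coordinate of Player 1's move directs Player 2 into a row where every legal move either completes an SOS for Player 1 or sends Player 1 back into a previously prepared winning row; or (iii) a stuck position in which Player 2 has no legal move while Player 1 still has a completing move available. Option (iii) is a genuine weapon here, as illustrated by the example preceding the theorem, and I expect the winning strategy to blend all three. A useful invariant to track through the tree is the set of ``alive'' rows, meaning rows that contain at most one letter and that could still become an SOS.

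The main obstacle is combinatorial bookkeeping rather than a single slick idea. Even after symmetry reduction the tree may branch several moves deep, each branch must be checked for safety (Player 1 must never be the one forced into a losing configuration), and the stuck-without-a-legal-move clause interacts subtly with the row-exhaustion dynamics. I would organize the proof as a decision tree keyed by the row Player 2 is forced into at each turn, presenting the critical positions as small 3-by-3 tables in the style of Tables~\ref{table:case1} and~\ref{table:case2}, and would cross-verify the analysis against the authors' SOS solver \cite{GR}. Absent a clean unifying invariant, the proof will ultimately be a careful but finite case check, and its length will be dictated primarily by how aggressively symmetry can be used to prune duplicate branches.
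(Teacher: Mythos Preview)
Your proposal outlines the right genre of argument---an explicit opening followed by case analysis on Player~2's constrained replies---but it never actually executes the proof, and two of the shortcuts you plan to rely on are wrong.

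First, the symmetry claim is false: SOS is a palindrome, so reversal is a symmetry, but swapping S and O yields OSO, not SOS. You cannot identify positions that differ by a letter swap, so your proposed 8-fold reduction is not available.

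Second, your reason for opening with O is a misreading. The remark that ``starting with S may be bad'' belongs to the \emph{expandable} SOS game, a completely different variant; it tells you nothing about restricted-placement SOS. In fact the paper's winning line opens with S at $(1,1)$.

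Most importantly, the paper's proof is far shorter than you anticipate, and the reason is a concrete idea you have not identified. After S at $(1,1)$, Player~2 is forced into row~1. The four replies split into just two cases according to whether Player~2's move creates a brink in row~1. If not, Player~1 plays in column~1 to create a brink there; Player~2 is sent back to row~1 (the last empty cell), cannot win, and Player~1 completes column~1. If so, Player~1 plays on the main diagonal to create a second brink; Player~2 is sent to a row with a single letter, cannot win, and whichever column Player~2 uses leaves one of the two brinks completable on Player~1's next turn. The whole tree is five plies deep and needs no computer check, no row-exhaustion trick, and no double-threat machinery. Your plan, by contrast, hedges between two openings, defers the choice of letter, and budgets for a much larger search than is needed; as written it is a description of how one might look for a proof rather than a proof.
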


\begin{proof}
The first player begins by putting an S in the top left corner. Thus, the second player has to move in the first row. There are four possibilities for the second move.

\textbf{Case 1.} The second player doesn't create a brink in row 1. He either places an S in row 1, column 2, or places an O in row 1, column 3. Then the first player has to play in the second or third row. The first player places a letter in column 1, creating a brink there. Then the second player has to place anything in the last empty cell of the first row. The second player couldn't win, as there was no brink. After that, the first player can finish SOS in the first column. Table~\ref{table:case1rp} shows the game's progress in this case, where X stands for either S or O.

\begin{table}[ht!]
\begin{center}
\begin{tabular}{|p{0.25cm}|p{0.25cm}|p{0.25cm}|}
\hline
   S  & S & \\ \hline
      &   & \\ \hline
      &   & \\ \hline
\end{tabular}
\quad
\begin{tabular}{|p{0.25cm}|p{0.25cm}|p{0.25cm}|}
\hline
   S  & S & \\ \hline
   O  &  & \\ \hline
      &   & \\ \hline
\end{tabular}
\quad
\begin{tabular}{|p{0.25cm}|p{0.25cm}|p{0.25cm}|}
\hline
   S  & S & X\\ \hline
   O  &  & \\ \hline
      &   & \\ \hline
\end{tabular}
\quad
\begin{tabular}{|p{0.25cm}|p{0.25cm}|p{0.25cm}|}
\hline
   S  & S & X\\ \hline
   O   &   & \\ \hline
   S   &   & \\ \hline
\end{tabular} 
\caption{The game's progress in the first case.}
\label{table:case1rp}
\end{center}
\end{table}

\textbf{Case 2.} The second player creates a brink in row 1. He either places an S in row 1, column 3, or places an O in row 1, column 2. Then the first player has to play in the second or third row. The first player places a letter on the main diagonal, creating a brink there. The second player can't win on their move as there is only one letter in the row where the player needs to go. If the second player places a letter in the first column, the first player can finish the brink in the first row. If the second player places a letter in the other column, the first player can finish the diagonal brink. Table~\ref{table:case1rp} shows an example of the game in this case.

\begin{table}[ht!]
\begin{center}
\begin{tabular}{|p{0.25cm}|p{0.25cm}|p{0.25cm}|}
\hline
   S  & O & \\ \hline
      &   & \\ \hline
      &   & \\ \hline
\end{tabular}
\quad
\begin{tabular}{|p{0.25cm}|p{0.25cm}|p{0.25cm}|}
\hline
   S  & O & \\ \hline
      & O & \\ \hline
      &  & \\ \hline
\end{tabular}
\quad
\begin{tabular}{|p{0.25cm}|p{0.25cm}|p{0.25cm}|}
\hline
   S  & O & \\ \hline
      & O & X\\ \hline
      &   & \\ \hline
\end{tabular}
\quad
\begin{tabular}{|p{0.25cm}|p{0.25cm}|p{0.25cm}|}
\hline
   S  & O & \\ \hline
      & O & X\\ \hline
      &   & S\\ \hline
\end{tabular}
\caption{The game's progress in the second case.}
\label{table:case2rp}
\end{center}
\end{table}
\end{proof}

After exploring all these variations on SOS, we can SSS-afely claim that we had SOO much fun with this game! We encourage the reader to SSSS-ee what other fun variations they can uncover and SSOO-lve!

\section{Acknowledgments}

This project was done as part of MIT PRIMES STEP, a program that allows students in grades 6 through 9 to try research in mathematics. Tanya Khovanova is the mentor of this project. We are grateful to PRIMES STEP for this opportunity.

\end{document}